\newtheorem{thm}{Theorem}
\newtheorem{rem}[thm]{Remark}
\newtheorem{lem}[thm]{Lemma}
\newtheorem{defn}[thm]{Definition}
\newtheorem{que-ed}{Question-editing}
\newtheorem{con}[thm] {Conjecture}
\def\PHameo{\operatorname{PHameo}}
\begin{document}

\title[Hamiltonian isotopies]{A note on $C^0$ rigidity of Hamiltonian isotopies}
\date{\today}
\author{Sobhan Seyfaddini }

\address{University of California Berkeley\\ Berkeley, CA 94720 \\USA}
\email{sobhan@math.berkeley.edu}

\begin{abstract}

{\noindent We show that a symplectic isotopy that is a $C^0$ limit of Hamiltonian isotopies is itself Hamiltonian, if the corresponding  sequence of generating Hamiltonians converge in $L^{(1, \infty)}$ topology.
}

\end{abstract}

\maketitle 

\section{Introduction}
   Let $(M,\omega)$ denote a $2n$-dimensional closed and connected symplectic manifold. Denote by $Symp(M)$ and $Ham(M)$ the groups of symplectomorphisms and Hamiltonian diffeomorphisms of $M$, respectively.  This note addresses a question related to the $C^0$-Flux conjecture.  
  
  Banyaga's foundational paper \cite{Ba} gave rise to the $C^0$-Flux conjecture; it states that $Ham(M)$ is $C^0$-closed in $Symp_0(M)$, the path component of identity in $Symp(M)$.  An article by Lalonde, McDuff, and Polterovich \cite{LMP} contains a full discussion of this conjecture and a proof of it in special cases.  As is noted in \cite{LMP}, the $C^0$-Flux conjecture is a fundamental question lying at the boundary between hard and soft symplectic topology; very little is known about the $C^0$-Flux conjecture and today's symplectic technology seems to be insufficient for approaching this question in full generality.  
  
  The reader may wonder why it is asked if $Ham(M)$ is $C^0$-closed in $Symp_0(M)$ rather than $Symp(M)$.  The difficulty in addressing the latter question is that, although symplectic rigidity tells us that $Symp(M)$ is $C^0$-closed in the group of diffeomorphisms of $M$,  it is not known if $Symp_0(M)$ is $C^0$-closed in $Symp(M)$.  To avoid this difficulty the $C^0$-Flux conjecture is usually stated for $Symp_0(M)$.  For further information about this intriguing conjecture we refer the reader to \cite{MS, LMP}.  
  
  In this note we address a path version of the $C^0$-Flux conjecture: we ask if the space of (based) Hamiltonian isotopies is $C^0$-closed in the space of (based) symplectic isotopies.  If the $C^0$-Flux conjecture were true, it would immediately answer this question in the affirmative.  Below, we will show that a symplectic isotopy which is a $C^0$ limit of Hamiltonian isotopies is itself Hamiltonian, provided that the corresponding  sequence of  generating Hamiltonians converge in $L^{(1, \infty)}$ topology.  In particular, because uniform convergence of functions is stronger than $L^{(1,\infty)}$ convergence, the same statement is true if we assume that the generating Hamiltonians converge uniformly.
  
  We should mention that the $C^1$-Flux conjecture, which states that $Ham(M)$ is $C^1$-closed in $Symp(M)$, was settled by Ono in \cite{On}.  Note that in this case there is no need for restricting to $Symp_0(M)$ because it is known that $Symp_0(M)$ is $C^1$-closed in $Symp(M)$.  This follows from the fact that a $C^1$-small neighborhood of identity in $Symp(M)$ can be identified with a neighborhood of zero in the space of closed $1$-forms on $M$ \cite{W}.
  
  \section{Main Result}
  In this section we set our notation and state our main result.

  Denote by $Homeo(M)$ and $Diff(M)$ the groups of homeomorphisms and diffeomorphisms of $M$.  A (smooth) isotopy of $M$ is a (smooth) continuous map $\phi: [0,1] \times M \rightarrow M$, such that the time-t map, obtained by fixing $t$ and denoted by $\phi^t$, is a homeomorphism (or diffeomorphism if $\phi$ is smooth) of $M$. Throughout this note, unless otherwise stated, we assume $\phi^0 = Id$. A smooth isotopy $\phi^t$ is called symplectic/Hamiltonian if $\phi^t \in Symp(M)/Ham(M)$ for all $t \in [0,1]$.  We denote by $PSymp(M)$ and $PHam(M)$ the set of all symplectic and Hamiltonians isotopies, respectively.  We use the notation $\phi^t_H$ to denote the Hamiltonian isotopy generated by the Hamiltonian $H: [0,1] \times M \rightarrow \mathbb{R}.$  Throughout this note we assume all Hamiltonians are normalized in the sense that  $\int_M H(t,x) \omega^n = 0, \forall t\in[0,1]$.  Hamiltonian isotopies are sometimes called Hamiltonian paths; we will be using both terminologies.
  
  Let $d$ denote a distance induced on $M$ by a Riemannian metric.  For $\phi, \; \psi \in Homeo(M)$ define their $C^0$ distance by $d_{C^0}(\phi, \psi) = \max_x d(\phi(x), \psi(x))$.  We define the $C^0$ distance between two isotopies $\phi^t$ and $\psi^t$ by $d_{C^0}^{path}(\phi^t, \psi^t) = \max_{0 \leq t \leq 1} d_{C^0}(\phi^t, \psi^t).$  The topologies induced by the above distances are referred to as $C^0$ topologies.  In this notation, the path version of the $C^0$-Flux conjecture can be stated as:
  
  \begin{con}\label{Main Question}
       $PHam(M)$ is $C^0$-closed in $PSymp(M)$.
   \end{con}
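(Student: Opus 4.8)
The plan is to reduce the statement to a question about the flux homomorphism and then to confront the failure of its $C^0$-continuity directly. Recall that to a symplectic isotopy $\{\psi^t\}$ generated by the time-dependent vector field $X_t$ (so that $\frac{d}{dt}\psi^t = X_t \circ \psi^t$) one associates its flux
\begin{equation*}
\operatorname{Flux}(\{\psi^t\}) = \int_0^1 [\iota_{X_t}\omega]\, dt \in H^1(M;\mathbb{R}),
\end{equation*}
and that, by Banyaga's theorem, a symplectic isotopy belongs to $PHam(M)$ precisely when its flux vanishes. Thus, given a sequence $\phi^t_{H_n} \in PHam(M)$ converging in $d_{C^0}^{path}$ to $\psi^t \in PSymp(M)$, each approximant satisfies $\operatorname{Flux}(\{\phi^t_{H_n}\}) = 0$, and the conjecture is equivalent to the assertion that this vanishing survives the limit, i.e.\ that $\operatorname{Flux}(\{\psi^t\}) = 0$.

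First I would evaluate the flux against homology classes via its interpretation as swept symplectic area. For a smooth loop $\gamma\colon S^1 \to M$, let $\Psi_n, \Psi_\infty \colon [0,1]\times S^1 \to M$ denote the cylinders $\Psi_n(t,s) = \phi^t_{H_n}(\gamma(s))$ and $\Psi_\infty(t,s) = \psi^t(\gamma(s))$. A standard computation gives
\begin{equation*}
\langle \operatorname{Flux}(\{\phi^t_{H_n}\}), [\gamma]\rangle = \int_{[0,1]\times S^1} \Psi_n^*\omega, \qquad \langle \operatorname{Flux}(\{\psi^t\}), [\gamma]\rangle = \int_{[0,1]\times S^1} \Psi_\infty^*\omega,
\end{equation*}
the former vanishing because for a Hamiltonian flow the integrand equals $\partial_s\bigl(H_n(t,\Psi_n(t,s))\bigr)$, whose integral over the circle is zero. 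Since $[\gamma]$ can be taken to range over a basis of $H_1(M;\mathbb{R})$, it suffices to show that each swept area of $\Psi_\infty$ vanishes; and since $\Psi_n \to \Psi_\infty$ uniformly, it would suffice to prove the convergence of swept areas, $\int \Psi_n^*\omega \to \int \Psi_\infty^*\omega$.

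To attempt this convergence I would cover $M$ by Darboux balls on which $\omega = d\lambda$, subdivide the cylinder $[0,1]\times S^1$ finely enough that each cell maps into a single ball under both $\Psi_n$ (for large $n$) and $\Psi_\infty$, and invoke Stokes to rewrite each cellular contribution to the area as a line integral of the primitive $\lambda$ around the cell boundary. Comparing the two cylinders cell by cell, the interior edges cancel, leaving only boundary terms, and one would try to bound $\bigl|\int \Psi_n^*\omega - \int \Psi_\infty^*\omega\bigr|$ by the discrepancy between these line integrals.

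This is exactly where the main obstacle lies, and it is the crux of why the conjecture remains open. A line integral $\int_{\partial\sigma} \Psi^*\lambda$ depends on the tangential \emph{derivative} of $\Psi$ along $\partial\sigma$, so $d_{C^0}^{path}$-closeness of $\Psi_n$ to $\Psi_\infty$ gives no control whatsoever over the difference of the two line integrals: one can perturb a map in a $C^0$-small way while changing its derivatives, and hence the swept area, by a definite amount. In short, the flux is a first-order invariant whereas the hypothesis is purely zeroth-order, so no amount of Stokes-type bookkeeping can close the gap without an additional input that recovers derivative (or area) information from $C^0$ data — for instance a uniform $C^1$ bound on the generators, an energy or Hofer bound, or a hard rigidity phenomenon such as $C^0$-continuity of a suitable area functional or of spectral invariants. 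Extracting the vanishing of the limiting swept area from the $C^0$ hypothesis alone, \emph{without} such an extra input, is precisely the open content of the conjecture, and it is why the problem sits at the boundary between hard and soft symplectic topology.
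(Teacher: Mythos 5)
You have not proved the statement, and you are right not to have: the statement is labeled a \emph{conjecture} in the paper, and the paper itself offers no proof of it. So the honest verdict is that there is no proof to compare against, and your write-up correctly diagnoses why the obvious soft argument fails. Your analysis in fact matches the paper's framing closely: the author notes that the conjecture ``lies at the boundary between hard and soft symplectic topology,'' that in dimension $2$ a simple argument does show the limit isotopy has vanishing flux (your swept-area computation is essentially that argument, and it genuinely works there), and that in higher dimensions some additional input is required. The paper's actual contribution, its Main Theorem, supplies exactly one of the ``extra inputs'' on your list: it assumes the generating Hamiltonians $H_i$ converge in the $L^{(1,\infty)}$ (Hofer) topology, and then proves the limit is a smooth Hamiltonian path. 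Notably, even with that hypothesis the paper does \emph{not} proceed by making your flux estimate converge; instead it invokes hard rigidity in a different form, namely the (local) uniqueness-of-generators theorem for topological Hamiltonian paths of Buhovsky--Seyfaddini, comparing $H$ locally with primitives $G$ of the closed forms $\alpha_t = \omega(X_t,\cdot)$ to deduce exactness of $\alpha_t$, first for almost every $t$, then for all $t$ by smoothness, then for all of $[0,1]$ by a continuation argument. So your instinct that ``Hofer bound or spectral-type rigidity'' is the missing ingredient is precisely what the literature bears out.

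Two technical points in your reduction deserve correction, though they do not affect your main conclusion. First, the conjecture asserts that the limit \emph{path} is Hamiltonian, i.e.\ that $\psi^t \in Ham(M)$ for every $t$; this requires $[\iota_{X_t}\omega] = 0$ for each $t$, equivalently the vanishing of the flux of the restricted path $\{\psi^s\}_{0 \le s \le t}$ for \emph{every} $t \in [0,1]$, not merely the flux of the full path over $[0,1]$ (which could vanish by cancellation while intermediate fluxes do not). This is harmless for your scheme, since each approximating Hamiltonian path has vanishing flux over every subinterval, but the reduction as you stated it is strictly weaker than the conjecture. Second, Banyaga's criterion for a \emph{time-one map} to lie in $Ham(M)$ involves the flux modulo the flux group $\Gamma \subset H^1(M;\mathbb{R})$; the clean statement you want for paths is the one above (exactness of $\iota_{X_t}\omega$ for all $t$), which avoids $\Gamma$ altogether. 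With those repairs, your proposal is a correct account of the state of the problem: the swept-area functional is a first-order quantity, $C^0$ convergence controls nothing first-order, and closing that gap without an auxiliary hypothesis is exactly the open content of the conjecture.
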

   
  Suppose that $\phi^t = (C^0) \lim_{i\to \infty} \phi^t_{H_i}$ and that $\phi^t$ is symplectic.  Conjecture \ref{Main Question} says that $\phi^t$ is a Hamiltonian path.  First, observe that, by the Gromov-Eliashberg $C^0$-rigidity theorem, requiring the isotopy $\phi^t$ to be symplectic is no stronger than requiring it to be smooth.   Second, in dimension 2, a simple argument can be used to show that $\phi^t$ has vanishing flux and hence it must be Hamiltonian.  However, this argument can not be generalized to all symplectic manifolds and hence the question remains open in higher dimensions.  The main result of this note answers the above question affirmatively under the assumption that the generating Hamiltonians, $H_i$, converge in $L^{(1, \infty)}$ topology.  The $L^{(1, \infty)}$ (or Hofer \cite{H, HZ}) norm of a Hamiltonian $K$ is given by:  $ \Vert K \Vert_{(1,\infty)} = \int_0^1 (\max_x K(t,x) - \min_x K(t,x))\,dt.$  Here is our main result:
  \begin{thm} (Main Theorem) \label{Main Theorem}
   			Consider a smooth isotopy, $\phi^t \text{ }, t \in [0, 1]$, which is a $C^0$ limit of Hamiltonian paths, $\phi_{H_i}^{t} \text{ }, t \in [0, 1]$.  If the sequence of Hamiltonians $H_i$ has a $L^{(1, \infty)}$ limit $H:[0,1] \times M \rightarrow \mathbb{R}$, then 
   	
   	   \begin{enumerate}
   	      \item The $L^{(1, \infty)}$ equivalence class of $H$ has a smooth representative.
   	      \item $\phi^t$ is a Hamiltonian path and the smooth representative of $H$ is its Hamiltonian function.
   	    \end{enumerate}
   \end{thm}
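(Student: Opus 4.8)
The plan is to reduce the statement to the vanishing of the flux of the limit path, to establish that vanishing using the $L^{(1,\infty)}$ hypothesis through an energy--area estimate, and finally to identify the resulting smooth Hamiltonian with $H$ by a uniqueness argument for generating Hamiltonians. First I would use that $\phi^t$ is smooth and symplectic to write its generating vector field $X_t$, $\frac{d}{dt}\phi^t = X_t\circ\phi^t$, and set $\alpha_t = \iota_{X_t}\omega$; each $\alpha_t$ is closed. The path $\phi^t$ is Hamiltonian precisely when every $\alpha_t$ is exact, equivalently when the flux $t\mapsto \int_0^t [\alpha_s]\,ds\in H^1(M;\mathbb{R})$ vanishes identically (differentiate in $t$ to recover $[\alpha_t]$). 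In that case the normalized primitives $F_t$, with $dF_t=\alpha_t$, assemble into a smooth Hamiltonian $F$ satisfying $\phi^t=\phi^t_F$, which is conclusion (2); conclusion (1) follows once $F$ is shown to represent the $L^{(1,\infty)}$ class of $H$. So the theorem rests on two points: (A) the flux of $\phi^t$ vanishes, and (B) the smooth generator $F$ is $L^{(1,\infty)}$-equivalent to $H$.

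For (A), I would fix a loop $\gamma:S^1\to M$ and use that the flux pairs with $[\gamma]$ as a swept symplectic area: with $\Phi(s,\theta)=\phi^s(\gamma(\theta))$ one has $\langle\int_0^{t_0}[\alpha_s]\,ds,[\gamma]\rangle=\int_{[0,t_0]\times S^1}\Phi^*\omega$, while the same formula applied to $\Phi_i(s,\theta)=\phi^s_{H_i}(\gamma(\theta))$ yields $0$, since a Hamiltonian path has $\iota_{X}\omega=dH_i$ exact. Because $\phi^t_{H_i}\to\phi^t$ in $C^0$ and the two cylinders share the initial boundary loop $\gamma$ at $s=0$ (both isotopies are based), gluing a short-geodesic homotopy and applying Stokes reduces the difference of the two areas to the symplectic area of the thin annulus $A_i$ interpolating between $\phi^{t_0}_{H_i}\circ\gamma$ and $\phi^{t_0}\circ\gamma$. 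Thus $\langle\mathrm{Flux}(t_0),[\gamma]\rangle=\lim_i\int_{A_i}\omega$, and the task becomes showing these areas tend to $0$.

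This is where the difficulty concentrates, and it is exactly the phenomenon that keeps Conjecture \ref{Main Question} open: although $A_i$ is $C^0$-thin, its symplectic area is \emph{not} controlled by $C^0$-data alone, since there is no $C^1$ bound on $\phi^{t_0}_{H_i}$ and hence the length of $\phi^{t_0}_{H_i}\circ\gamma$ (and the naive area estimate) may blow up. The role of the hypothesis is that $L^{(1,\infty)}$-convergence of $H_i$ makes the flows Cauchy in Hofer's metric, so the swept area can instead be bounded symplectically rather than metrically: I would estimate $\int_{A_i}\omega$ via a displacement/energy--capacity inequality, where the relevant energy is measured by the $L^{(1,\infty)}$ norm of the Hamiltonian comparing $\phi^{t_0}_{H_i}$ with $\phi^{t_0}_{H_j}$, which is small by Cauchyness. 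I expect this energy--area estimate to be the main obstacle and the genuinely hard input; everything else is soft. Once it is in place, $\int_{A_i}\omega\to 0$, the flux vanishes, and (A) yields the smooth generator $F$.

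Finally, to prove (B) I would compare $\phi^t_F=\phi^t$ with $\phi^t_{H_i}$: the Hamiltonians generating the comparison isotopies have flows converging to the identity in $C^0$, while their generators are essentially $F-H_i$ up to the composition and normalization corrections. Invoking the uniqueness theorem for topological Hamiltonians — a continuous family of Hamiltonians whose flow is the constant path must be trivial as an $L^{(1,\infty)}$ class — forces $\Vert F-H_i\Vert_{(1,\infty)}\to 0$. Hence $F$ is a smooth representative of the $L^{(1,\infty)}$ limit $H$, proving (1), and since $dF_t=\iota_{X_t}\omega$ it generates $\phi^t$, completing (2).
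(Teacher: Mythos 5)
Your reduction to vanishing flux, the swept-area formula, and the Stokes argument with the short-geodesic annuli are all fine, as is your final identification of $F$ with $H$ via the uniqueness theorem for generators of topological Hamiltonian paths. But the proof has a genuine gap exactly where you locate it: the claim that $\int_{A_i}\omega\to 0$ is never established, and the tool you propose cannot establish it. Note first that your own Stokes argument shows that for all large $i$ the number $\int_{A_i}\omega$ \emph{equals} $\langle\mathrm{Flux}(t_0),[\gamma]\rangle$ exactly; the sequence is eventually constant, so there is no limiting process from which smallness could be extracted --- one must prove outright that this fixed number is zero. Second, the energy--capacity inequality runs in the wrong direction for your purposes: it gives \emph{lower} bounds on Hofer energy in terms of capacities of displaced sets, not \emph{upper} bounds on the symplectic area of an annulus in terms of energy. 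Third, Hofer--Cauchyness of the sequence $\phi^{t_0}_{H_i}$ only permits comparisons among the $\phi^{t_0}_{H_i}$ themselves (and indeed the annulus swept between $\gamma_i$ and $\gamma_j$ by a Hamiltonian isotopy has zero area for free); the difficulty is the comparison with the limit loop $\phi^{t_0}\circ\gamma$, and since $\phi^{t_0}$ is not known to lie in $Ham(M)$ --- that is precisely what is being proved --- Hofer-geometric quantities involving it are not even defined. So the step you flag as ``the main obstacle'' is not a technical estimate awaiting routine verification; it is the same wall that keeps Conjecture \ref{Main Question} open, and your hypotheses are not fed into the argument in any way that gets around it.

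The paper takes a genuinely different route that avoids estimating swept areas altogether. It observes that $\phi^t$ is, by definition, a topological Hamiltonian path with $L^{(1,\infty)}$ generator $H$, and then brings in the local uniqueness theorem of \cite{BS} (Theorem \ref{Local Uniqueness of Generators}): near any point $p$, write $\alpha_t=dG(t,\cdot)$ for a smooth local primitive $G$; the composition $(\phi^t_G)^{-1}\circ\phi^t_H$ is a topological Hamiltonian path equal to the identity on a small open set for a short time, so its generator is locally constant in $x$ for almost every $t$, which forces $H(t,\cdot)=G(t,\cdot)+c(t)$ locally a.e.\ in $t$. Hence $\alpha_t=dH(t,\cdot)$ locally, so $\alpha_t$ is exact for almost every small $t$; smoothness of $t\mapsto\int_\gamma\alpha_t$ upgrades this to every $t$, and a continuation argument (using the time-shift Lemma \ref{time shift lemma}) extends the conclusion from a short time interval to all of $[0,1]$. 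In other words, where you try to kill the flux by a direct geometric estimate on $C^0$-thin annuli, the paper kills it by importing the hard analytic input of \cite{BS}, which converts the $L^{(1,\infty)}$ hypothesis into local exactness of $\alpha_t$. Your step (B) is essentially the paper's final step and is correct, but without a proof of step (A) the proposal does not prove the theorem.
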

   
    The methods employed in this paper are very different than those used by Lalonde, McDuff, and Polterovich in \cite{LMP}.  We use M\"uller and Oh's theory of topological Hamiltonian paths \cite{OM}.  The main tool used in this paper is a theorem  from \cite{BS} on uniqueness of generators for topological Hamiltonian paths and its corollaries.  An analogous result has been proven for contact isotopies in Section 13.2 of \cite{MuSp}.  In Section \ref{Topological Hamiltonian Paths}, we will review the theory of topological Hamiltonian paths and reformulate Theorem \ref{Main Theorem} as a question about topological Hamiltonian paths.  In Section \ref{Proofs}, we prove our main result.
   
   \section{Topological Hamiltonian Paths} \label{Topological Hamiltonian Paths}
    
    \begin{defn}(Topological Hamiltonian Paths \cite{OM})\\ \label{Topological Hamiltonian Paths Definition}
    An isotopy $\phi_H^t : M \rightarrow M \text{ } (0\leq t \leq 1)$ is called a topological Hamiltonian path generated by $H \in L^{(1,\infty)}([0,1] \times M)$ if there exists a sequence of smooth Hamiltonian paths, $\phi^t_{H_i} \text{ } (0\leq t \leq 1)$ such that $ \phi^t = (C^0) \lim_{i \to \infty} \phi^t_{H_i}$  and $ H = (L^{(1,\infty)}) \lim_{i \to \infty} H_i. $  Denote by $\PHameo(M)$ the set of all topological Hamiltonian paths. 
    
    \end{defn}
     
   Observe that Theorem \ref{Main Theorem} says that if $\phi^t_H \in PHameo(M)$ happens to be smooth then it is a smooth Hamiltonian path and $H$ is its (smooth) Hamiltonian.  Oh and M\"uller showed that topological Hamiltonian paths satisfy the following basic properties.
   
   \begin{thm}(\cite{OM}) \label{Basic Properties}
   Suppose $\phi^t_H \text{, } \phi^t_K \in PHameo(M)$.  Then,
   
   \begin{enumerate}
   
      \item $\phi^t_{H \# K} := \phi^t_H \circ \phi^t_K$ is a topological Hamiltonian path and it is generated by  $H \# K(t,x):= H(t,x) + K(t,(\phi^{t}_{H})^{-1}(x))$.  
      \item $ \phi^t_{\overline{H}} := (\phi^t_H)^{-1}$ is a topological Hamiltonian path and it is generated by $\overline{H}(t,x):= -H(t,\phi^t_{H}(x))$.
      \item If $\alpha : [0,1] \rightarrow [0,1]$ is a $C^1$ map then $\phi^t_{H^{\alpha}} := \phi^{\alpha(t)}_H $ is a topological Hamiltonian path and it is generated by $H^{\alpha}(t,x) : = \alpha'(t)  H(\alpha(t),x)$.
   
   \end{enumerate}

   \end{thm}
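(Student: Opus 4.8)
The plan is to derive each of the three assertions from the corresponding classical formula for smooth Hamiltonian flows and then push it through the $C^0$ and $L^{(1,\infty)}$ limits, so that all the work is in checking that the relevant convergences survive the limit. By Definition~\ref{Topological Hamiltonian Paths Definition}, I would fix smooth defining sequences $H_i\to H$, $K_i\to K$ in $L^{(1,\infty)}$ with $\phi^t_{H_i}\to\phi^t_H$ and $\phi^t_{K_i}\to\phi^t_K$ in $C^0$, and for each item take the obvious candidate approximants $\phi^t_{H_i}\circ\phi^t_{K_i}$, $(\phi^t_{H_i})^{-1}$ and $\phi^{\alpha(t)}_{H_i}$, which are smoothly generated by the classical formulas $H_i\#K_i$, $\overline{H_i}$ and $H_i^{\alpha}$ respectively. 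It then remains to show that these approximants converge in $C^0$ to the asserted limit paths and that their generators converge in $L^{(1,\infty)}$ to the asserted formulas.

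First I would record two elementary facts. (i) On the compact manifold $M$, $C^0$-convergence of homeomorphisms forces $C^0$-convergence of inverses: if $\phi_i\to\phi$ uniformly but $\phi_i^{-1}(x_i)\not\to\phi^{-1}(x)$ for some $x_i\to x$, then extracting $\phi_i^{-1}(x_i)\to y$ and using $\phi_i(\phi_i^{-1}(x_i))=x_i\to x$ together with $\phi_i\to\phi$ uniformly yields $\phi(y)=x$, contradicting injectivity; compactness of $[0,1]\times M$ upgrades this to $(\phi^t_{H_i})^{-1}\to(\phi^t_H)^{-1}$ uniformly in $t$. (ii) The oscillation $\mathrm{osc}_x f=\max_x f-\min_x f$ is invariant under precomposition with any bijection of $M$, and along a suitable subsequence an $L^{(1,\infty)}$ limit of continuous functions is continuous in $x$ for almost every $t$.

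With these in place, item (1) would proceed as follows. The convergence $\phi^t_{H_i}\circ\phi^t_{K_i}\to\phi^t_H\circ\phi^t_K$ in $C^0$ is continuity of composition, bounding $d_{C^0}(\phi^t_{H_i}\circ\phi^t_{K_i},\phi^t_{H_i}\circ\phi^t_K)$ by the eventual equicontinuity of the uniformly convergent family $\{\phi^t_{H_i}\}$ and $d_{C^0}(\phi^t_{H_i}\circ\phi^t_K,\phi^t_H\circ\phi^t_K)$ by $d_{C^0}^{path}(\phi^t_{H_i},\phi^t_H)$. For the generators I would write $H_i\#K_i-H\#K=(H_i-H)+\big[K_i(t,(\phi^t_{H_i})^{-1}(\cdot))-K(t,(\phi^t_H)^{-1}(\cdot))\big]$ and split the bracket at the intermediate term $K(t,(\phi^t_{H_i})^{-1}(\cdot))$: the $(K_i-K)$ contribution has oscillation exactly $\mathrm{osc}_x(K_i-K)(t,\cdot)$ by invariance, integrating to $\|K_i-K\|_{(1,\infty)}\to0$, while the term $K(t,(\phi^t_{H_i})^{-1}(\cdot))-K(t,(\phi^t_H)^{-1}(\cdot))$ tends to $0$ uniformly in $x$ for a.e. $t$ and is dominated in oscillation by $2\,\mathrm{osc}_x K(t,\cdot)\in L^1([0,1])$, so dominated convergence finishes it. Item (2) is the identical estimate applied to $\overline{H_i}-\overline{H}=-\big[H_i(t,\phi^t_{H_i}(\cdot))-H(t,\phi^t_H(\cdot))\big]$, with $\phi^t_{H_i}\to\phi^t_H$ used in place of the inverses.

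The hard part will be exactly this last dominated-convergence step: since $H$ and $K$ lie only in $L^{(1,\infty)}$, they are continuous in space only for almost every time, so one cannot invoke uniform continuity of the generator directly and must isolate the non-smoothness in the precomposition-invariant term while treating the ``same generator, nearby homeomorphism'' term through a.e. continuity and an $L^1$ domination. Item (3) is softer in the path but subtler in the norm: $\phi^{\alpha(t)}_{H_i}\to\phi^{\alpha(t)}_H$ in $C^0$ is immediate since $\alpha([0,1])\subseteq[0,1]$, while $\|H_i^{\alpha}-H^{\alpha}\|_{(1,\infty)}=\int_0^1|\alpha'(t)|\,\mathrm{osc}_x(H_i-H)(\alpha(t),\cdot)\,dt$, which is at most $\|H_i-H\|_{(1,\infty)}$ when $\alpha$ is monotone and in general equals $\int_0^1\mathrm{osc}_x(H_i-H)(s,\cdot)\,N(s)\,ds$ with $N(s)=\#\alpha^{-1}(s)$; here convergence must be extracted from $\int_0^1 N(s)\,ds=\int_0^1|\alpha'|\,dt<\infty$ together with a.e. vanishing of $\mathrm{osc}_x(H_i-H)(s,\cdot)$ along a subsequence.
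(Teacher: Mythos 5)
A preliminary remark: the paper does not prove this theorem at all --- it is quoted from \cite{OM} and used as a black box --- so there is no internal proof to compare yours against; what follows assesses your argument on its own terms.

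Items (1) and (2) of your proposal are correct. The two facts you isolate (uniform convergence of inverses of homeomorphisms on a compact manifold, and invariance of $\mathrm{osc}_x$ under precomposition by a bijection), together with a representative of the limit Hamiltonian that is continuous in $x$ for a.e.\ $t$ and the $L^1$ domination $\mathrm{osc}_x\bigl[K(t,(\phi^t_{H_i})^{-1}(\cdot))-K(t,(\phi^t_H)^{-1}(\cdot))\bigr]\le 2\,\mathrm{osc}_x K(t,\cdot)$, do yield $\|H_i\#K_i-H\#K\|_{(1,\infty)}\to 0$ and $\|\overline{H_i}-\overline{H}\|_{(1,\infty)}\to 0$, and your $C^0$ arguments for compositions and inverses are sound.

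Item (3), however, contains a genuine gap in the non-monotone case, and it is not a deferred detail: the ``extraction'' you postulate does not exist. Your own reduction gives $\|H_i^{\alpha}-H^{\alpha}\|_{(1,\infty)}=\int_0^1 g_i(s)N(s)\,ds$ with $g_i(s)=\mathrm{osc}_x(H_i-H)(s,\cdot)\to 0$ in $L^1$ and $N\in L^1$ the counting function of $\alpha$. But $g_i\to 0$ in $L^1$ together with $N\in L^1$ does \emph{not} imply $\int g_iN\,ds\to 0$, even along a subsequence. Concretely, a $C^1$ map $\alpha$ modeled on $t^3\sin^2(1/t)$ has $N(s)\asymp s^{-1/3}$ near $s=0$ (unbounded but integrable); taking $g_i=\epsilon_i^{-2/3}\chi_{[\epsilon_i,2\epsilon_i]}$ with $\epsilon_i\to 0$ gives $\int g_i\,ds=\epsilon_i^{1/3}\to 0$ while $\int g_iN\,ds$ stays bounded below by a positive constant for \emph{every} $i$. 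This failure is realized within the hypotheses of the theorem: fix a normalized smooth $f$ with $\mathrm{osc}\,f=1$ and set $H_i(t,x)=g_i(t)f(x)$ (smoothed in $t$); then $H_i\to 0$ in $L^{(1,\infty)}$ and $\phi^t_{H_i}\to\mathrm{id}$ uniformly, so this is a legitimate defining sequence for $\phi^t_H=\mathrm{id}$, $H=0$, yet $\|H_i^{\alpha}-H^{\alpha}\|_{(1,\infty)}=\int g_iN\,ds\not\to 0$. Thus for such $\alpha$ the pushed-forward sequence $H_i^{\alpha}$ is simply not a defining sequence for $\phi^{\alpha(t)}_H$ (even though the conclusion of (3) holds here for trivial reasons), and your approach must either assume the indicatrix $N$ is bounded --- e.g.\ $\alpha$ monotone or piecewise monotone, where $\int g_iN\le\|N\|_{\infty}\|g_i\|_{L^1}$ finishes the proof --- or construct a different approximating sequence rather than reparametrizing the given one. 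Note, as mitigation, that the monotone case, where your proof is complete, is the only case this paper actually invokes: Remarks \ref{R: rescaling of intervals} and \ref{Remark to Local Uniqueness of Generators} use property (3) only for the linear rescalings $\alpha(t)=Tt$.
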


   \noindent Note that (1) and (2) in Theorem \ref{Basic Properties} imply that $\displaystyle PHameo(M)$ is a group.  

     \begin{rem} \label{R: rescaling of intervals}
     In Definition \ref{Topological Hamiltonian Paths Definition}, we require that the time variable $t$ take values in $[0,1]$.  It can  easily be seen that the interval $[0,1]$ can be replaced with any interval of the form $[0,a]$, for any $a \in \mathbb{R}$.  
   \end{rem}

     Note that because topological Hamiltonian paths are defined via limits of smooth Hamiltonian paths, it is not clear that a given topological Hamiltonian path has a unique $L^{(1, \infty)}$ generator.  This question was raised in \cite{OM}. In \cite{V}, Viterbo showed that if one replaces the assumption $H = (L^{(1,\infty)}) \lim_{i \to \infty} H_i$ in Definition \ref{Topological Hamiltonian Paths Definition} with $H = (C^0) \lim_{i \to \infty} H_i$ then uniqueness holds on closed manifolds.  Oh extended Viterbo's result to open manifolds \cite{Oh}.  The general case, where it is assumed that $H = (L^{(1,\infty)}) \lim_{i \to \infty} H_i$, was settled in \cite{BS}.

    \begin{thm} (Uniqueness of Generators)\label{Uniqueness of Generators}
         Every topological Hamiltonian path has a unique $L^{(1, \infty)}$ generator.
    \end{thm}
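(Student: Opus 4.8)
The plan is to use the group structure provided by Theorem \ref{Basic Properties} to reduce the statement to a rigidity property of a single path, and then to attack that property by an energy--capacity argument. Suppose $H$ and $K$ are two normalized $L^{(1,\infty)}$ generators of the same topological Hamiltonian path $\phi^t$. By parts (1) and (2) of Theorem \ref{Basic Properties}, $\overline{K}$ generates $(\phi^t)^{-1}$, and $H \# \overline{K}$ generates $\phi^t \circ (\phi^t)^{-1} = \Id$; a direct computation with the composition formula, using that $\phi^t_K((\phi^t_H)^{-1}(x)) = x$ when $\phi^t_H = \phi^t_K$, shows that in fact $H \# \overline{K} = H - K$. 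Thus it suffices to prove the following normalized statement: \emph{if a normalized $H \in L^{(1,\infty)}$ generates the constant path $\phi^t \equiv \Id$, then $H = 0$ in $L^{(1,\infty)}$.}

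I would argue this by contradiction, assuming $H \neq 0$. Since $H$ is normalized, $H \neq 0$ forces the spatial oscillation $\max_x H(t,\cdot) - \min_x H(t,\cdot)$ to be bounded below by a positive constant $c$ on a set of times of positive measure. The first technical step is to \emph{localize in time}: using Lebesgue-point and Fubini arguments applied to the smooth approximations $H_i$ (which is necessary because $H$ is only $L^{(1,\infty)}$ and has no well-defined values on time slices), I would produce a short interval $[t_0-\delta, t_0+\delta]$ on which, for all large $i$, the oscillation of $H_i$ is still bounded below, so that $\int_{t_0-\delta}^{t_0+\delta}(\max_x H_i - \min_x H_i)\,dt$ is comparable to $c\delta$ and hence as small as desired once $\delta$ is small.

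The heart of the argument is then to convert this localized non-vanishing into a violation of the Lalonde--McDuff energy--capacity inequality $e(A) \geq \tfrac12 w(A)$, where $e$ denotes displacement energy and $w$ the Gromov width. Here one exploits the single genuinely $C^0$-robust fact available: a diffeomorphism that is $C^0$-close to $\Id$ cannot displace any ball of radius exceeding its $C^0$-distance to the identity. Because $\phi^t_{H_i} \to \Id$ uniformly in $t$, the window-flow $(\phi^{t_0-\delta}_{H_i})^{-1}\circ \phi^{t_0+\delta}_{H_i}$ is $C^0$-small and displaces no ball of fixed radius by itself; the plan is to combine it with an auxiliary autonomous Hamiltonian so as to build a Hamiltonian isotopy that \emph{does} displace a ball of a definite capacity, while having Hofer norm controlled by the small quantity $\int_{t_0-\delta}^{t_0+\delta}(\max_x H_i - \min_x H_i)\,dt$, contradicting the energy--capacity inequality in the limit $i \to \infty$, $\delta \to 0$.

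I expect the main obstacle to be precisely this conversion, and it is here that the difficulty of the whole problem is concentrated. Spectral invariants and displacement energy are \emph{not} $C^0$-continuous, so the vanishing of the generator cannot simply be read off from $\phi^t_{H_i} \to \Id$; this is exactly the phenomenon that makes the uniqueness question nontrivial. To displace a ball of fixed capacity an auxiliary Hamiltonian must genuinely carry energy bounded below, so the only route to a small total energy is to arrange a near-cancellation between such a displacing Hamiltonian and the near-identity window-flow of $H_i$. Realizing this cancellation is exactly what forces the non-constancy of $H_i$ to enter in an essential way, and controlling it uniformly in $i$ and stably under the passage from $H_i$ to the $L^{(1,\infty)}$ limit $H$ is the crux of the proof.
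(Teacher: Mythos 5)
First, context: the paper does not actually prove this theorem. It is stated as an imported result --- it is the main theorem of \cite{BS} --- and is used here as a black box (``the general case \ldots was settled in \cite{BS}''). So there is no internal proof to compare yours against; I can only assess your attempt on its own merits and against the argument of \cite{BS}. Your opening reduction is correct and standard: if $H$ and $K$ are normalized generators of the same path, parts (1) and (2) of Theorem \ref{Basic Properties} show that $H\#\overline{K}$ generates the identity path, and since $\phi^t_K=\phi^t_H$ as maps the composition formula collapses to $H\#\overline{K}=H-K$, so the theorem is equivalent to the assertion that the only normalized $L^{(1,\infty)}$ generator of the constant identity path is $0$. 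Your measure-theoretic preliminaries (oscillation bounded below by some $c>0$ on a set of times of positive measure, Lebesgue-point localization applied to the smooth $H_i$) are also sound, and your choice of tool --- the energy--capacity inequality together with the fact that a $C^0$-small map cannot displace a ball of definite size --- matches the actual machinery of \cite{BS}.

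The genuine gap is that everything after this point is a statement of intent rather than an argument. The entire mathematical content of the theorem is concentrated in the one step you defer: manufacturing, out of the window flows of $H_i$ and an auxiliary Hamiltonian, an isotopy that displaces a ball of definite capacity while having Hofer norm strictly below what the energy--capacity inequality permits. As you essentially observe, the naive construction fails: composing a window flow with a displacing Hamiltonian $F$ gives a generator with energy bound $\Vert F \Vert + \Vert H_i \Vert$, which lies on the wrong side of the inequality, so no contradiction results; one must instead make the oscillation of $H_i$ \emph{subsidize} the displacement cost, producing an isotopy of energy roughly $\Vert F\Vert$ minus a quantity proportional to $c$, and this cancellation must survive both the $C^0$ limit of the flows and the $L^{(1,\infty)}$ limit of the generators. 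You explicitly flag this construction as ``the crux of the proof'' and leave it unexecuted. Naming the crux is not supplying it: that construction is precisely the hard core of \cite{BS}, a result difficult enough to have remained open after Viterbo's and Oh's partial results and to warrant its own paper. As submitted, your proposal establishes nothing beyond the easy reduction and a correct guess at which tool should eventually do the work.
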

    
    The main tool used in proving Theorem \ref{Main Theorem} is the following generalization of Theorem \ref{Uniqueness of Generators}. This is Theorem 11 from \cite{BS}.
    
    \begin{thm} (Local Uniqueness of Generators, see Thm. 11 in \cite{BS}) \label{Local Uniqueness of Generators}
    Suppose $\phi^t_H \in PHameo(M)$ coincides with the identity map on some open subset $ U \subset M $, i.e. $ \phi^t_H (x) = x \text{ } \forall (t,x) \in [0,1] \times U$. Then for almost all $ t \in [0,1] $ the restriction $ H(t,\cdot)|_{U} $ is a constant function depending on t.     
    \end{thm}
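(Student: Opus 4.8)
The plan is to localize and argue by contradiction, the key point being that the statement is \emph{trivial} when $H$ is smooth: if $\phi^t_H(x)=x$ for all $t$ and all $x\in U$, then $X_H(t,x)=\tfrac{d}{dt}\phi^t_H(x)=0$ on $U$, so $dH(t,\cdot)$ vanishes on $U$ and $H(t,\cdot)|_U$ is locally constant. The entire difficulty is that $H$ is only an $L^{(1,\infty)}$ generator. So I would fix a sequence of smooth $H_i$ with $\phi^t_{H_i}\to\phi^t_H$ in $C^0$ and $H_i\to H$ in $L^{(1,\infty)}$, and reduce the theorem to proving $\int_0^1 \mathrm{osc}_B H(t,\cdot)\,dt=0$ for every ball $B$ with $\overline B\subset U$, where $\mathrm{osc}_B f=\max_B f-\min_B f$; since $U$ is connected and covered by such balls, $H(t,\cdot)|_U$ is then constant for a.e.\ $t$. (This is the local refinement of the $U=M$ case, Theorem \ref{Uniqueness of Generators}.) Because $|\mathrm{osc}_B f-\mathrm{osc}_B g|\le \mathrm{osc}_M(f-g)$, the $L^{(1,\infty)}$-convergence gives $\int_0^1\mathrm{osc}_B H_i\to\int_0^1\mathrm{osc}_B H$, so it is equivalent to show the left-hand limit vanishes.

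Suppose then, for contradiction, that $\int_0^1\mathrm{osc}_B H>0$ for some such $B$. Here both hypotheses are genuinely needed, and it is worth seeing why: a thin, tall bump has arbitrarily small $C^0$-displacement yet large oscillation, while a fixed, non-degenerate bump has a definite $C^0$-displacement; the tension between a $C^0$-small flow and a genuine, non-collapsing $L^{(1,\infty)}$ oscillation is exactly what must be exploited. Using that $H$ is a \emph{fixed} function with positive oscillation integral, I would first extract data that does not collapse: a positive-measure time set $T\subset[0,1]$ and fixed-size sub-balls of $B$ across which $H(t,\cdot)$ exhibits a definite value gap $c>0$ for $t\in T$ (a Lebesgue-density / measurable-selection step recording that the oscillation of the fixed limit localizes to regions of definite size on a definite set of times). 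Passing to $H_i$, the same separation survives up to an error tending to $0$, and by Remark \ref{R: rescaling of intervals} I may rescale time onto $T$.

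The core of the argument is to convert this persistent oscillation into a definite displacement, contradicting $\phi^t_{H_i}\to\mathrm{Id}$. The tool is the energy--capacity inequality $e(b)\ge\tfrac12\,c_{\mathrm{Gromov}}(b)>0$ for the displacement energy of a ball $b$. Because $L^{(1,\infty)}$-convergence prevents the sub/super-level geometry of $H$ over $B$ from degenerating, the approximants $H_i$ retain non-collapsing level sets there, and one can choose auxiliary cut-off/comparison Hamiltonians supported in $B$, adapted to these level sets, so that the associated flow moves a ball of definite capacity across a definite distance at some time in $[0,1]$; the composition formula of Theorem \ref{Basic Properties} keeps the bookkeeping of generators on $U$ transparent. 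Feeding the resulting displaced ball into energy--capacity then forces $\max_t d_{C^0}(\phi^t_{H_i}|_B,\mathrm{Id})$ to be bounded below by a positive constant depending only on $c$, contradicting $\phi^t_{H_i}\to\mathrm{Id}$ and hence forcing $\int_0^1\mathrm{osc}_B H=0$.

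I expect the conversion step in the previous paragraph to be the main obstacle. A per-Hamiltonian implication ``large oscillation $\Rightarrow$ large $C^0$-displacement'' is simply false, as the thin-bump example shows, and symmetrically a fixed bump always displaces; so neither hypothesis alone suffices and the proof must use the pair together. The delicate technical content is to show that $L^{(1,\infty)}$-convergence forbids the oscillation of the limit from being supported on collapsing regions, and then to turn that fixed, non-degenerate geometry into a capacity lower bound for a displaced ball that is incompatible with the $C^0$ upper bound. Carrying the measure-theoretic bookkeeping in $t$ through this limiting construction is the remaining burden; the reduction and the energy--capacity input are otherwise standard.
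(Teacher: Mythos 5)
First, a point of orientation: the paper does not prove this statement at all --- it is imported verbatim as Theorem 11 of \cite{BS}, whose proof occupies a substantial part of that paper. So your attempt has to be measured against the argument in \cite{BS}, and against that standard it has a genuine gap. Your first two paragraphs (reduce to showing $\int_0^1 \operatorname{osc}_B H(t,\cdot)\,dt=0$ for balls $\overline{B}\subset U$, using $|\operatorname{osc}_B f-\operatorname{osc}_B g|\le \operatorname{osc}_M(f-g)$ and the $L^{(1,\infty)}$-convergence) are correct but routine. The core step --- converting the persistent oscillation of the limit $H$ into a definite $C^0$-displacement for the approximating flows --- is only described as an expectation (``one can choose auxiliary cut-off/comparison Hamiltonians \dots so that the associated flow moves a ball of definite capacity''), and you acknowledge yourself that it is ``the main obstacle.'' That step is not a technicality to be deferred: it is the entire mathematical content of the theorem, and the reason the present paper cites \cite{BS} rather than giving an argument.

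Two places where your sketch would actually break if pushed. First, the extraction step: $H$ is merely an $L^{(1,\infty)}$ function, so positive oscillation on $B$ for a positive-measure set of times only produces, for each such $t$, two positive-\emph{measure} subsets of $B$ on which the values of $H(t,\cdot)$ differ by $c$; it does not produce sub-balls on which $H$ is uniformly large and uniformly small, which is what any displacement argument needs. A pigeonhole over a finite cover upgrades this to essential sup/inf gaps between two fixed balls, but superlevel sets of a measurable function can have positive measure while containing no ball of definite size. This is precisely the difficulty separating the $L^{(1,\infty)}$ statement from the $C^0$ versions of Viterbo \cite{V} and Oh \cite{Oh}, where continuity of the limit makes sub/super-level sets open; your phrase ``$L^{(1,\infty)}$-convergence prevents the sub/super-level geometry of $H$ from degenerating'' asserts away exactly this problem, since $L^{(1,\infty)}$-convergence controls measures, while the energy--capacity inequality sees capacities, and positive measure does not imply positive capacity. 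Second, the bookkeeping via Theorem \ref{Basic Properties} is not ``transparent'': the $H_i$ are global Hamiltonians whose flows are close to the identity only on $U$ and are wild elsewhere, so the generator of any composition of $\phi^t_{H_i}$ with an auxiliary flow supported in $B$ has full Hofer norm comparable to $\Vert H_i\Vert_{(1,\infty)}$, which is not small; a naive application of energy--capacity to such a composition therefore yields nothing. The proof in \cite{BS} gets around both issues by a genuinely nontrivial construction (compositions arranged so that the uncontrolled global parts cancel, together with a careful passage from measure-theoretic data about $H$ to a displaceable set of definite capacity), and no version of that construction appears in your proposal.
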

    
    \begin{rem} \label{Remark to Local Uniqueness of Generators}
    The above statement remains true if the the interval $[0,1]$ is replaced by $[0,T]$, for some $T \leq 1$.  This follows from property (3) in Theorem \ref{Basic Properties}.
     
    \end{rem}
   
\section{Proofs} \label{Proofs}
 		 
 		 We will need the following lemma:
 		 
 		 \begin{lem} \label{time shift lemma}
	       Let $\phi^t_H$, $t\in[0,1]$, denote a topological Hamiltonian path.  For each fixed $s \in [0,1), \text{} \phi_K^t := \phi_H^{t+s} \circ (\phi_H^s)^{-1}:[0,1-s] \times M \rightarrow M,$ is a topological Hamiltonian path with generator $K(t,x): [0,1-s] \times M \rightarrow \mathbb{R}$, where $K(t,x) := H(t+s,x)$.  Note that $\phi_K^t$ is defined on the interval $[0, 1-s]$; see Remark \ref{R: rescaling of intervals}.
	   \end{lem}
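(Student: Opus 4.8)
The plan is to lift everything to the approximating sequence. Since $\phi^t_H \in \PHameo(M)$, choose smooth Hamiltonian paths $\phi^t_{H_i}$ with $\phi^t_H = (C^0)\lim_{i} \phi^t_{H_i}$ and $H = (L^{(1,\infty)})\lim_{i} H_i$. Define $K_i(t,x) := H_i(t+s,x)$ on $[0,1-s]\times M$; each $K_i$ is smooth and normalized. A direct differentiation shows that the smooth isotopy $\phi^{t+s}_{H_i}\circ(\phi^s_{H_i})^{-1}$ has $\Id$ as its time-$0$ map and velocity equal to the Hamiltonian vector field of $H_i(t+s,\cdot)$ along itself, so it is exactly the Hamiltonian path $\phi^t_{K_i}$ generated by $K_i$. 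I would thus aim to show that this sequence $C^0$-converges to $\phi^t_K$ and that $K_i \to K$ in $L^{(1,\infty)}$ on $[0,1-s]$; by Definition \ref{Topological Hamiltonian Paths Definition}, applied on the interval $[0,1-s]$ which is legitimate by Remark \ref{R: rescaling of intervals}, this gives the claim.

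The $L^{(1,\infty)}$ convergence is the easy half: writing $\operatorname{osc}_x$ for $\max_x - \min_x$, a change of variables gives $\int_0^{1-s} \operatorname{osc}_x (K_i-K)(t,\cdot)\,dt = \int_s^1 \operatorname{osc}_x (H_i-H)(u,\cdot)\,du \le \Vert H_i - H\Vert_{(1,\infty)} \to 0$.

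The $C^0$ convergence is the heart of the matter, and the main obstacle is that composition and, especially, inversion must be shown to be continuous for the uniform distance. Writing $g^t_i = \phi^{t+s}_{H_i}$, $g^t = \phi^{t+s}_H$, $h_i = (\phi^s_{H_i})^{-1}$, $h = (\phi^s_H)^{-1}$, I want $\max_t d_{C^0}(g^t_i\circ h_i,\, g^t\circ h)\to 0$. I would bound $d(g^t_i(h_i(x)), g^t(h(x)))$ by $d_{C^0}(g^t_i,g^t) + d(g^t(h_i(x)), g^t(h(x)))$. The first term is controlled uniformly in $t$ by the path-$C^0$ convergence $\phi^u_{H_i}\to\phi^u_H$. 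For the second term I would use that $\phi:[0,1]\times M\to M$ is uniformly continuous, being continuous on a compact domain, so the family $\{\phi^u_H\}_{u}$ is uniformly equicontinuous; combined with $h_i\to h$ uniformly this forces the second term to $0$ uniformly in $t$ and $x$.

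It remains to establish $h_i\to h$ in $C^0$, i.e. continuity of inversion, which is the delicate point since $C^0$-convergence of homeomorphisms does not a priori pass to inverses. I would prove it directly using compactness of $M$: if $d_{C^0}(h_i,h)\not\to 0$, extract points $y_i$ with $d(h_i(y_i),h(y_i))\ge\epsilon$, pass to convergent subsequences $y_i\to y$ and $h_i(y_i)\to x$, and use $\phi^s_{H_i}\to\phi^s_H$ uniformly together with continuity of $\phi^s_H$ to conclude $\phi^s_H(x)=y$, hence $x=h(y)$; since $h$ is continuous, $h(y_i)\to h(y)=x$, contradicting the separation. This uses only that $\phi^s_H$ is a homeomorphism and that $\phi^s_{H_i}\to\phi^s_H$ uniformly, both of which hold. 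Assembling the two convergences completes the proof.
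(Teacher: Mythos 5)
Your proof is correct and follows essentially the same route as the paper: pass to the approximating sequence, set $K_i(t,x) := H_i(t+s,x)$ so that $\phi^t_{K_i} = \phi^{t+s}_{H_i}\circ(\phi^s_{H_i})^{-1}$, and verify the two convergences required by Definition \ref{Topological Hamiltonian Paths Definition} on $[0,1-s]$. The only difference is one of detail: the paper simply asserts that $(C^0)\lim \phi^t_{H_i} = \phi^t_H$ implies $(C^0)\lim \phi^t_{K_i} = \phi^t_K$, whereas you supply the compactness argument for convergence of inverses and the equicontinuity argument for compositions, which is exactly the content hiding behind that assertion.
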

	   
	   \begin{proof}
	      Fix $s \in [0, 1)$.
	      We must show that there exist Hamiltonians $K_i: [0, 1-s] \times M \rightarrow \mathbb{R}$  such that $(L^{(1, \infty)}) \lim K_i = K$ and $(C^0) \lim \phi^t_{K_i} = \phi_K^t.$  
	      
	      We know that  $\phi^t_H$ is a topological Hamiltonian path.  Hence, there exists a sequence of smooth Hamiltonians $H_i: [0,1] \times M \rightarrow \mathbb{R}$ such that $(L^{(1, \infty)}) \lim H_i = H$ and $(C^0) \lim \phi^t_{H_i} = \phi_{H}^t$.  
	      
	      Let $\phi^t_{K_i} := \phi_{H_i}^{t+s} \circ (\phi_{H_i}^s)^{-1}:[0,1-s] \times M \rightarrow M$.  The smooth Hamiltonian path  $\phi^t_{K_i}$ is generated by the Hamiltonian $K_i(t,x) := H_i(t+s,x):[0, 1-s] \times M \rightarrow \mathbb{R}.$  Now, the fact that $(C^0) \lim \phi^t_{H_i} = \phi_{H}^t$ implies that $(C^0) \lim \phi^t_{K_i} = \phi_{K}^t$ and the fact that $(L^{(1, \infty)}) \lim H_i = H$ implies that $(L^{(1, \infty)}) \lim K_i = K.$

	   \end{proof}

     \begin{thm}
			Let $\phi^t_H$ , $t\in[0,1]$, denote a topological Hamiltonian path. If $\phi^t_H$ is smooth then it is a smooth Hamiltonian path. 
	   \end{thm}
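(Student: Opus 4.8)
The plan is to show that the smooth isotopy $\phi^t_H$ is generated by a smooth Hamiltonian and then to identify that Hamiltonian with $H$ via uniqueness of generators. First, since $\phi^t_H$ is a $C^0$ limit of the symplectomorphisms $\phi^t_{H_i}$ and is assumed smooth, the Gromov--Eliashberg $C^0$-rigidity theorem guarantees that each $\phi^t_H$ is a symplectomorphism; thus $\{\phi^t_H\}$ is a smooth symplectic isotopy. Let $X_t$ denote its (smooth) generating vector field, so that $\iota_{X_t}\omega$ is a closed $1$-form for every $t$. The whole problem reduces to proving that $\iota_{X_t}\omega$ is \emph{exact} for all $t$: if so, then writing $\iota_{X_t}\omega = dF(t,\cdot)$ for the normalized smooth primitive $F$, we obtain $\phi^t_H = \phi^t_F$, and Theorem~\ref{Uniqueness of Generators} forces $F = H$ in $L^{(1,\infty)}$, giving both conclusions.

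The heart of the argument is a local comparison of $H$ with a smooth primitive, carried out via Theorem~\ref{Local Uniqueness of Generators}. Fix a time $t_0$ and a point $p$. Using the time-shift Lemma~\ref{time shift lemma} I would replace $\phi^t_H$ by $\phi^t_K := \phi^{t+t_0}_H\circ(\phi^{t_0}_H)^{-1}$, which starts at the identity and whose generating vector field is simply $X_{t+t_0}$. On a Darboux ball $U \ni p$ the closed form $\iota_{X_{t+t_0}}\omega$ admits a smooth primitive $g(t,\cdot)$; choosing nested open sets $p\in V'' \subset\subset V \subset\subset U$ and a cutoff $\chi$ with $\supp\chi\subset U$ and $\chi\equiv 1$ on $V$, I set $G(t,x):=\chi(x)g(t,x)$. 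Then $\phi^t_G$ is a \emph{global} smooth Hamiltonian path whose vector field agrees with that of $\phi^t_K$ on $V$. For $\epsilon$ small enough the trajectories through $V''$ remain inside $V$ for $t\in[0,\epsilon]$, so by uniqueness of solutions of ODEs one has $\phi^t_G=\phi^t_K$ on $V''$ for such $t$.

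With this in hand I would form $\psi^t := (\phi^t_G)^{-1}\circ \phi^t_K$ on $[0,\epsilon]$. By Theorem~\ref{Basic Properties} (together with Remark~\ref{R: rescaling of intervals}) this is a topological Hamiltonian path, and a direct computation gives its generator as $(K-G)\bigl(t,\phi^t_G(x)\bigr)$. Since $\psi^t\equiv\Id$ on $V''$, Theorem~\ref{Local Uniqueness of Generators} (with Remark~\ref{Remark to Local Uniqueness of Generators} for the interval $[0,\epsilon]$) tells us that $x\mapsto (K-G)(t,\phi^t_G(x))$ is constant on $V''$ for almost every $t$. Transporting this statement by the diffeomorphism $\phi^t_G$ and using $G=g$ on $V$, I conclude that for almost every $t$ the function $K(t,\cdot)=H(t+t_0,\cdot)$ coincides, up to an additive constant $c(t)$, with the smooth function $g(t,\cdot)$ on the open set $\phi^t_G(V'')$; in particular the (continuous) function $H(s,\cdot)$ satisfies $d_xH(s,\cdot)=\iota_{X_s}\omega$ near $p$ for almost every $s$ close to $t_0$.

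Finally I would globalize. Letting $p$ range over $M$ and $t_0$ over $[0,1]$ and invoking compactness, the open sets produced above cover $M$, so for almost every $s$ the function $H(s,\cdot)$ satisfies $d_xH(s,\cdot)=\iota_{X_s}\omega$ on all of $M$; being a globally defined primitive, it exhibits $\iota_{X_s}\omega$ as exact for almost every $s$. Since $s\mapsto[\iota_{X_s}\omega]\in H^1(M;\mathbb{R})$ is continuous and vanishes on a set of full measure, it vanishes identically, so $\iota_{X_s}\omega$ is exact for every $s$ and we are done by the reduction above. I expect the main obstacle to be the step that makes the cutoff flow $\phi^t_G$ agree with the genuine flow on a truly open set for a positive time: this forces both the localization in time (hence the use of Lemma~\ref{time shift lemma}) and the careful bookkeeping needed to promote the resulting almost-everywhere-in-$t$, local-in-space identities into a single global statement valid for almost every $s$.
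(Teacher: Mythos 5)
Your proof follows essentially the same route as the paper's: reduce via Gromov--Eliashberg rigidity to showing the closed forms $\alpha_t=\iota_{X_t}\omega$ are exact, compare $H$ locally with a cut-off primitive $G$ by applying Theorem~\ref{Local Uniqueness of Generators} (with Remark~\ref{Remark to Local Uniqueness of Generators}) to the path $(\phi^t_G)^{-1}\circ\phi^t_K$, use Lemma~\ref{time shift lemma} to access base times other than $0$, upgrade almost-everywhere exactness to exactness for all times by continuity of the flux, and identify the smooth generator with $H$ via Theorem~\ref{Uniqueness of Generators}. The structural difference is how you pass from local-in-time to global-in-time: the paper proves exactness on an initial interval $[0,T]$ (its Steps 1--4) and then runs a continuation argument (Step 5: set $s^*=\sup\{r: \phi^t_H\in\Ham(M)\ \forall t\in[0,r]\}$ and apply the time-shift lemma at $s^*$ to reach a contradiction if $s^*<1$), whereas you shift to every base time $t_0$ and try to conclude in one stroke ``by compactness.''

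That last step is the gap. For fixed $t_0$, compactness of $M$ does produce a single $\epsilon(t_0)>0$ such that $\alpha_s$ is exact for almost every $s\in[t_0,t_0+\epsilon(t_0)]$ (this is the paper's Step 2). But these intervals of validity emanate only \emph{forward} from $t_0$ --- the shifted path starts at the identity at time $t_0$, and Theorem~\ref{Local Uniqueness of Generators} applies on intervals starting at $0$ --- and $\epsilon(t_0)$ has no uniform positive lower bound. Hence the open intervals $(t_0,t_0+\epsilon(t_0))$ need not cover $(0,1)$: a point $s$ lies in none of them if $\epsilon(t_0)\le s-t_0$ for all $t_0<s$; nor is there in general a finite subfamily of the half-open intervals covering $[0,1)$. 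So compactness of $[0,1]$ gives nothing, and ``for almost every $s\in[0,1]$'' does not follow as stated. The repair, however, is cheap and already implicit in your own final step: your local argument does show that the set of exact times is \emph{dense} in $[0,1]$, since any subinterval $(a,b)$ contains almost all of $[t_0,\min(b,t_0+\epsilon(t_0))]$ for any $t_0\in(a,b)$, a set of positive measure; and density of the zero set is all that continuity of $s\mapsto[\alpha_s]\in H^1(M;\mathbb{R})$ needs in order to force $[\alpha_s]=0$ for every $s$. (Alternatively, the full-measure claim can be rescued by a Vitali-covering argument, or by the paper's sup/continuation argument.) With that one substitution your proof closes, and it is then a genuinely streamlined variant of the paper's: it bypasses the bootstrap Step 5 entirely.
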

	   
	   \begin{proof}
	      Let $X$ denote the time dependent vector field obtained by differentiating $\phi^t_H$ with respect to time, i.e. $X_t( \phi^t_H(x)) = \frac{d}{dt} \phi^t_H(x)$.  Let $ \alpha_t = \omega(X_t,\cdot).$  We know, by the Gromov-Eliashberg $C^0$-rigidity theorem , that $\phi^t_H$ is a symplectic isotopy and hence $X$ is a symplectic vector field, i.e.  $ \alpha_t$ is a closed one form $\forall t\in [0,1]$.  We will break the proof down to several steps.    
	  \medskip
	    
	  \noindent \textbf{Step 1:}  For every point $p \in M$ there exist a neighborhood of it $W_p$, a real number $T_p \in (0,1]$, and a smooth function $G: [0,1] \times M \rightarrow \mathbb{R}$ such that:
	   \begin{enumerate}
	   \item $\alpha_t|_{W_p} = dG(t,\cdot)|_{W_p}$ for all $t \in [0,T_p].$
	   \item $H(t,\cdot)|_{W_p} = G(t, \cdot)|_{W_p} + c(t)$ for almost every $t\in [0, T_p]$, where $c(t)$ is a $L^1$ function of $t$.  
	   \end{enumerate} 
	   \noindent \textbf{Proof of Step 1:} Take any point $p \in M$. Because $\alpha_t$ is closed, there exists a smooth function $G: [0,1] \times M \rightarrow \mathbb{R}$ and a neighborhood $U$ of the point $p$ such that $\alpha_t|_{U} = dG(t, \cdot)|_{U}$.  Because the family of one forms $\alpha_t$ varies smoothly with $t$, we can pick the function $G$ so that it is smooth in $t$.
	      
	      Let $\phi^t_G$ denote the Hamiltonian flow of $G$.  Clearly, the vector field of $\phi^t_G$ coincides with $X$ on $U$.  Hence, the two flows $\phi^t_H$ and $\phi^t_G$ coincide on a neighborhood of $p$, say $V_p$, for a short time period.  In other words, $\exists r_p  \in (0,1]$ (depending on $p$) and $\exists \; V_p \subset U$ such that
	    \[ \phi^t_H(x)=\phi^t_G(x) \text{, } \forall x \in V_p \text{ and } \forall t\in[0,r_p]. \]
 	    
	   By parts (2) and (3) of Theorem \ref{Basic Properties}, the composition $(\phi_G^t)^{-1} \circ \phi^t_H$ is a topological Hamiltonian path and it is generated by $ -G(t,\phi^t_G(x)) + H(t, \phi^t_G(x))$. Clearly, $(\phi_G^t)^{-1} \circ \phi^t_H(x) = x \text{ }  \forall x \in V_p \text{ and } \forall t\in[0,r_p]$. Thus, Theorem ~\ref{Local Uniqueness of Generators} and Remark ~\ref{Remark to Local Uniqueness of Generators} imply that $\forall x \in V_p$ and for almost all $ t \in [0,r_p]$ we have
	   \begin{equation}\label{H and G coincide locally}
	    H(t, \phi^t_G(x)) = G(t,\phi^t_G(x)) + c(t)
	   \end{equation}
	   where $c(t)$ is that it is a $L^1$ function of $t$.  Now, there exist a small neighborhood of $p$, say $W_p \subset V_p$, and a small real number $T_p \in (0,1]$ such that $(\phi_G^t)^{-1}(W_p)\subset V_p$.  We may assume that $T_p \leq r_p$.   For any point $x \in W_p$, we know that $(\phi_G^t)^{-1}(x) \in V_p$ for all $t \in [0, T_p]$.  Hence by (\ref{H and G coincide locally}), $H(t,x) = G(t,x) + c(t)$ for all $x \in W_p$ and almost every $t \in [0, T_p]$. 
	   
	   \medskip 
	   
	   \noindent \textbf{Step 2:} There exist a finite cover $\{W_i\}_{i=1}^{i=k}$ of $M$, smooth functions $\{G_i: [0,1] \times M \rightarrow \mathbb{R}\}_{i=1}^{i=k}$ and $T \in (0,1]$ such that:
	   \begin{enumerate}
	   \item $\alpha_t|_{W_i} = dG_i(t,\cdot)|_{W_i}$ for all $t \in [0,T].$
	   \item $H(t,\cdot)|_{W_i} = G_i(t, \cdot)|_{W_i} + c_i(t)$ for every $t\in [0, T] \setminus A$, where $c_i(t)$ is a $L^1$ function of $t$ and $A$ is a measure zero subset of $[0,T]$.  
	   \end{enumerate} 
	   
	  \noindent \textbf{Proof of Step 2:} Let $\{W_p: p\in M \}$ denote the collection of sets obtained in Step 1.  By compactness of $M$ we can pick a finite subcover $\{W_i\}_{i=1}^{i=k}$ with associated time intervals $[0, T_i]$ and functions $G_i$.  Let $T = \min_i\{T_i\}$.  Then it is clear that $\alpha_t|_{W_i} = dG_i(t,\cdot)|_{W_i}$ for all $t \in [0,T].$  Now, for each $i$ there exists a set $A_i$ of measure zero such that for all $t \in [0, T] \setminus A_i$ we have $H(t,\cdot)|_{W_i} = G_i(t, \cdot)|_{W_i} + c_i(t)$.  Let $A = \bigcup_{i=1}^{k} A_i$.  Then $A$ has measure zero and for every $t \in [0,T] \setminus A$ we have $H(t,\cdot)|_{W_i} = G_i(t, \cdot)|_{W_i} + c_i(t)$.  Note that for all $t \in [0,T] \setminus A$, and hence for almost every $t \in [0, T]$, the function $H(t, \cdot) : M \rightarrow \mathbb{R}$ is smooth on $M$.
	   
	    \medskip
	    
	    \noindent \textbf{Step 3:} For almost every $t \in [0,T]$ the one form $\alpha_t$ is exact.
	   
	    \noindent \textbf{Proof of Step 3:} By Step 2, $\alpha_t|_{W_i} = dG_i(t, \cdot)|_{W_i} \text{ } \forall t\in[0,T]$ and $H(t,x)|_{W_i} = G_i(t,x)|_{W_i} + c_i(t)$ for every $t \in [0, T] \setminus A$.  Clearly, this implies that $\alpha_t|_{W_i} = dH(t,\cdot)|_{W_i}$ for every $t\in [0,T] \setminus A$.  Because the sets $W_i$ cover $M$ we conclude that $\alpha_t$ is exact for every $t \in [0,T]\setminus A$.
	      
	    \medskip
	    
	    \noindent \textbf{Step 4:} For every $t \in [0,T]$ the one form $\alpha_t$ is exact.  Hence, the smooth isotopy $\phi^t_H \text{ } 0 \leq t \leq T$ is a smooth Hamiltonian path.
	   
	   \noindent \textbf{Proof of Step 4:} Let $\gamma : S^1 \rightarrow M$ denote a smooth loop.  Let $f(t) := \int_{\gamma}{ \alpha_t}$.  Then, $f(t)$ is a smooth function of $t$ because the family of one forms $\alpha_t$ depends smoothly on $t$.  Now, $\alpha_t$ is exact for almost every $t \in [0,T]$, hence $f(t) = 0$ for almost every $t\in [0, T]$.  Smoothness of $f$ implies that $f(t)=0$ for every $t \in [0, T].$  Therefore, for every $t \in [0, T]$ and any loop $\gamma$ in $M \text{ } \int_{\gamma}{ \alpha(t)}=0$, which means that $\alpha_t$ is exact for every $t\in [0, T]$.
	    
	   The above implies that for each $t\in [0,T]$ there exists a smooth function  $F(t, \cdot): M \rightarrow \mathbb{R}$ such that $\alpha_t = dF(t,\cdot)$. The functions $F(t, \cdot)$ can be picked such that they depend smoothly on $t$.  This, by definition, means that the isotopy $\phi^t_H \text{ } 0\leq t \leq T$ is a smooth Hamiltonian path, and in fact by the uniqueness theorem $F=H$ in $L^{(1, \infty)}([0,T] \times M)$.

	   \medskip
	      
	   \noindent \textbf{Step 5:} The smooth isotopy $\phi^t_H \text{ } 0 \leq t \leq 1$ is a smooth Hamiltonian path.
	    
	   \noindent \textbf{Proof of Step 5:} Let $s= \sup\{ r \in [0,1] : \phi^t_H \in Ham(M)  \; \forall t \in [0, r] \}$.  By Step 4, $s \geq T$ and hence $s$ is positive.
	    First, note that $\phi^s_H$ is Hamiltonian.  This is because $\phi^t_H \in Ham(M)$ for all $t \in [0, s)$ which means that $\alpha_t$ is exact for all $t \in [0, s)$.  The argument used in Step 4 shows that $\alpha_s$ is exact. 
	    	    
	    If $s=1$ we are done.  For a contradiction, assume $s < 1$.  Let $\phi_{H_s}^t := \phi_{H}^{t+s} \circ (\phi_{H}^s)^{-1}:[0,1-s] \times M \rightarrow M$.  By lemma \ref{time shift lemma}, $\phi_{H_s}^t$ is a topological Hamiltonian path (defined on [0, 1-s]) and it is generated by $H_s(t,x) = H(t+s,x)$.  Now, $\phi_{H_s}^t$ is a smooth topological Hamiltonian path.  Hence, by Steps 1 to 4 we can conclude that there exists $T' \in (0, 1-s]$ such $\phi_{H_s}^t \in Ham(M)$ for all $t \in [0, T']$.  Clearly, this implies that $\phi_{H}^{t+s}\in Ham(M)$ for all $t \in [0,T']$.  Thus, $s = sup\{ r \in [0,1] : \phi^t_H \in Ham(M)  \; \forall t \in [0, r] \} \geq T'+s$  which is a contradiction.  Hence, $s=1$ and we are done. \end{proof}
	   
	   \begin{proof} (Proof of the Main Theorem)
	   By Definition \ref{Topological Hamiltonian Paths Definition}, $\phi^t$ is a topological Hamiltonian path generated, in the sense of Definition \ref{Topological Hamiltonian Paths Definition}, by $H$.  Since $\phi^t$ is smooth the previous theorem implies that it is a smooth Hamiltonian path generated, in the usual sense, by a smooth Hamiltonian $F$.  Theorem \ref{Uniqueness of Generators} on uniqueness of generators implies that $H=F$ as $L^{(1, \infty)}$ functions. \end{proof}
	   
	   \subsubsection*{Acknowledgments}
	    I am thankful to Denis Auroux, Lev Buhovsky, and Beno\^it Jubin for helpful comments and conversations.  I would also like to thank my advisor, Alan Weinstein, for his suggestions, support and encouragement.

\end{document}